\newtheorem{thm}{Theorem}[section]
\newcounter{alphthm}
\newtheorem{theo}[alphthm]{Theorem}
\def\bd{\begin{definition}}
\def\ed{\end{definition}}
\def\bth{\begin{theorem}}
\def\eth{\end{theorem}}
\def\bpf{\begin{proof}}
\def\epf{\end{proof}}
\def\bco{\begin{corollary}}
\def\eco{\end{corollary}}
\def\ble{\begin{lemma}}
\def\ele{\end{lemma}}
\def\bpr{\begin{proposition}}
\def\epr{\end{proposition}}
\def\bAx{\begin{Axiom}}
\def\eAx{\end{Axiom}}
\newcommand{\be}{\begin{equation}}
\newcommand{\ee}{\end{equation}}
\newcommand{\bes}{\begin{equation*}}
\newcommand{\ees}{\end{equation*}}
\newcommand{\br}{\begin{remark}}
\newcommand{\er}{\end{remark}}
\newcommand{\ben}{\begin{enumerate}}
\newcommand{\een}{\end{enumerate}}
\newcommand{\beq}{\begin{eqnarray}}
\newcommand{\eeq}{\end{eqnarray}}
\newcommand{\beqn}{\begin{eqnarray*}}
\newcommand{\eeqn}{\end{eqnarray*}}
\def\nn{\nonumber}
\def\p{\partial}
\def\p{\partial}
\newtheorem{theorem}{Theorem}[section]
\newtheorem{lemma}[theorem]{Lemma}
\newtheorem{proposition}[theorem]{Proposition}
\newtheorem{corollary}[theorem]{Corollary}
\theoremstyle{definition}
\newtheorem{definition}[theorem]{Definition}
\newtheorem{Axiom}[theorem]{Axiom}
\theoremstyle{remark}
\newtheorem{remark}[theorem]{Remark}
\title{ The Schwarzian derivative on Finsler manifolds of constant curvature}
\author{\small B. BIDABAD\thanks{The corresponding author, bidabad@aut.ac.ir}, \  F. SEDIGHI}
\date{ }
\begin{document}
\maketitle
\begin{abstract}
Lagrange introduced the notion of Schwarzian derivative and Thurston discovered its mysterious properties playing a role similar to that of curvature on Riemannian manifolds.
 Here we continue our studies on the development of the Schwarzian derivative on Finsler manifolds.
First, we obtain an integrability condition for the M\"{o}bius equations. Then we obtain a rigidity result as follows; Let $(M, F)$ be a connected complete Finsler manifold of positive constant Ricci curvature. If it admits non-trivial M\"{o}bius mapping, then $M$ is homeomorphic to the $n$-sphere.
Finally, we reconfirm Thurston's hypothesis for complete Finsler manifolds and show that the Schwarzian derivative of a projective parameter plays the same role as the Ricci curvature on theses manifolds and could characterize a Bonnet-Mayer-type theorem.
\end{abstract}
\vspace{1cm}\leftline{{\textbf{AMS} subject Classification 2010: 53C60, 58B20.}}
\textbf{Keywords:}  Finsler;    Schwarzian; M\"{o}bius; constant curvature; conformal; completely integrable.\\
\section{Introduction}
  Historically, the definition and elementary properties of the Schwarzian derivative are first discovered by Lagrange in 1781.
The Schwarzian derivative of an injective real function $g$ on $\mathbb{R}$ is defined by
\[ S(g)=\frac{g'''}{g'}-\frac{3}{2}(\frac{g''}{g'})^2,\]
where $ g',g'',g''', $ are the first, the second, and the third derivatives of $g$ with respect to $x\in \mathbb{R}$.
There is a special type of conformal transformations on a Complex plane denoted by $T(z)=\frac{az+b}{cz+d}$, where $ad-bc\neq0,$ called the \textit{M\"{o}bius} transformation. They are characterized by  vanishing of the Schwarzian derivative $S(T)$ of $T$, that is $S(T)=0$.

The Schwarzian derivative is generalized for Riemannian manifolds by Carne \cite{Ca}, Osgood and Stowe \cite{OS}, etc. Thurston discovered that a conformal mapping into the Riemann sphere that has Schwarzian derivative uniformly near zero must be a M\"{o}bius transformation \cite{Th}.

 Recently, we have discussed several applications of Schwarzian in Finsler geometry which confirms Thurston's viewpoint on the mysterious role of this derivative,  for instance, the present authors have studied the Schwarzian of the projective parameter $p$ and classified some Randers manifolds.
    \begin{theo}\cite{B.S}\label{Th;BiSe}
    Let $ (M,F) $ be a compact boundaryless Einstein Randers manifold with constant Ricci scalar and the projective parameter  $p$.\\
\textbullet If $ S(p)=0 $, then $ (M,F) $ is Berwaldian.\\
\textbullet If $ S(p)<0 $, then $ (M,F) $ is Riemannian.
    \end{theo}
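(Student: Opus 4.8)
The plan is to translate both conclusions into a single Killing/Bochner vanishing argument for the non-Riemannian datum of the Randers metric, using the fact that the Schwarzian of the projective parameter is, up to a positive constant, the Ricci curvature read along geodesics. Writing $F=\alpha+\beta$ with $\beta=b_i\,dx^i$, I would first record the two structural benchmarks to be reached: $F$ is Berwaldian precisely when $\beta$ is parallel for the Levi-Civita connection of $\alpha$ (that is $b_{i|j}=0$), and $F$ is Riemannian precisely when $\beta\equiv 0$. Thus the whole theorem reduces to deciding, from the sign of $S(p)$, whether the relevant one-form is merely parallel or actually vanishes.

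First I would make the curvature content of $S(p)$ explicit. Along each geodesic the projective parameter $p$ is determined by the Schwarzian relation $\{p,t\}=\tfrac{2}{n-1}\mathrm{Ric}$, so that $S(p)$ is a positive multiple of the Ricci curvature evaluated in the flag direction. Since $(M,F)$ is Einstein with \emph{constant} Ricci scalar, one has $\mathrm{Ric}=(n-1)K\,F^2$ with $K$ a genuine constant, whence $S(p)$ is itself constant and its sign coincides with that of $K$. In particular $S(p)=0$ is equivalent to $K=0$ and $S(p)<0$ is equivalent to $K<0$.

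The core step is a Bochner argument carried out on a Riemannian background. Passing to the Zermelo navigation description $F\leftrightarrow(h,W)$, the Einstein together with the constant-scalar hypotheses force $h$ to be Einstein with the same constant $K$ and force the wind $W$ (the dual of $\beta$) to be a Killing field of $h$; the Weitzenb\"ock identity $\Delta\big(\tfrac{1}{2}\lvert W\rvert^2\big)=\lvert\nabla W\rvert^2-\mathrm{Ric}(W,W)$ then holds on $(M,h)$. Integrating over the compact boundaryless $M$ annihilates the left-hand side and yields $\int_M\lvert\nabla W\rvert^2=\int_M\mathrm{Ric}(W,W)$. If $S(p)=0$ then $K=0$, the right-hand side vanishes, and $\nabla W=0$; the corresponding $\beta$ is parallel and $(M,F)$ is Berwaldian. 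If $S(p)<0$ then $K<0$, so $\int_M\mathrm{Ric}(W,W)\le 0$ while $\int_M\lvert\nabla W\rvert^2\ge 0$, forcing both integrals to vanish; strict negativity of the Ricci form then gives $W\equiv 0$, i.e. $\beta\equiv 0$, so $(M,F)$ is Riemannian.

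The hard part will be the Finslerian bookkeeping that legitimizes the two translations rather than the final sign analysis. I expect the delicate points to be: verifying that the Einstein condition together with \emph{constancy} of the Ricci scalar (and not the Einstein condition alone) really produces the Killing property of $W$ and the Einstein background $h$; and tracking the Cartan-tensor corrections that separate the Finsler computation of $\{p,t\}$ from its Riemannian prototype, so that the clean proportionality $S(p)=\tfrac{2}{n-1}\mathrm{Ric}$ survives with the correct positive constant. Once these identifications are secured, the compact Killing-field dichotomy above closes both cases simultaneously.
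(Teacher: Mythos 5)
Your proposal is correct and takes essentially the same approach as the source proof: note that the present paper states Theorem \ref{Th;BiSe} as a citation of \cite{B.S}, and there (as in your write-up) the sign hypothesis on $S(p)$ is converted into a sign condition on the Einstein constant $K$ via the identity $S(p)=\tfrac{2}{n-1}F^{2}\mathrm{Ric}$ (equation \eqref{sch,Ric} of this paper), after which one concludes by the dichotomy for compact boundaryless Einstein Randers manifolds ($K=0$ implies Berwald, $K<0$ implies Riemannian). The only difference is that where \cite{B.S} invokes that dichotomy as a known theorem of Bao--Robles, you re-derive it via Zermelo navigation, the Killing property of the wind $W$ (with compactness forcing the homothety constant to vanish), and the Bochner integral argument --- which is exactly how the quoted theorem is proved, so the mathematical content coincides.
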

    Theorem \ref{Th;BiSe} confirms Thurston's guess in a particular case. In fact, it  shows that the Schwarzian $S(p)$ of a projective parameter $p$, plays a similar role as Ricci curvature on Einstein Randers manifolds, see \cite[Proposition A ]{B.S}.

  As another application a short proof for a known result of Z. Shen in Mathematische Annalen \cite{Sh2} is given in \cite{SBi}, where we proved
 that two projectively related complete Einstein Finsler spaces with constant negative Ricci scalar are homothetic.
We have also obtained the following result using Schwarzian;
\begin{theo}\cite{BiS2}
    Every complete Randers metric of constant negative Ricci curvature (in particular, of constant negative flag curvature) is Riemannian.
     \end{theo}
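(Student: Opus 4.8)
The plan is to derive the theorem from the homothety result for projectively related complete Einstein spaces recalled above (the Schwarzian-based theorem of \cite{SBi}), by pairing $F$ with its reverse metric. Writing $F=\alpha+\beta$, it suffices to prove that $\beta=0$. Introduce the reverse Randers metric $\tilde F(x,y):=F(x,-y)=\alpha(x,y)-\beta(x,y)$; since $\|{-\beta}\|_\alpha=\|\beta\|_\alpha<1$, this is again a genuine Randers metric. The key preliminary observation is that $F$ and $\tilde F$ share the same unparametrized geodesics: if $c(t)$ is an $F$-geodesic, then its reversal $t\mapsto c(-t)$ has the same trace and, because $\tilde F(x,-y)=F(x,y)$, the same length measured in $\tilde F$ as $c$ has in $F$; hence $c(-t)$ is a $\tilde F$-geodesic and the two metrics are projectively related.

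Next I would verify that $\tilde F$ inherits every hypothesis of the quoted theorem. Geodesic completeness is preserved under reversal, since reversing the parametrization of an $F$-geodesic produces a $\tilde F$-geodesic; thus $\tilde F$ is complete whenever $F$ is. For the curvature, the Ricci scalar is a function on $TM$ that transforms under reversal by $\mathrm{Ric}_{\tilde F}(x,y)=\mathrm{Ric}_F(x,-y)$. Therefore, if $F$ is Einstein with constant negative Ricci, $\mathrm{Ric}_F(x,y)=(n-1)\lambda\,F(x,y)^2$ with $\lambda<0$, then $\mathrm{Ric}_{\tilde F}(x,y)=(n-1)\lambda\,F(x,-y)^2=(n-1)\lambda\,\tilde F(x,y)^2$, so $\tilde F$ is likewise complete Einstein with the same negative Einstein constant $\lambda$.

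With both metrics certified, I apply \cite{SBi}: two projectively related complete Einstein Finsler spaces of constant negative Ricci scalar are homothetic. Hence $\tilde F=c\,F$ for some constant $c>0$. A constant rescaling leaves the geodesic spray, and therefore the Ricci function on $TM$, unchanged, so the Einstein constant of $c\,F$ is $\lambda/c^2$; equating this with the Einstein constant $\lambda\neq0$ of $\tilde F$ forces $c^2=1$, i.e.\ $c=1$. (Equivalently, $\alpha-\beta=c(\alpha+\beta)$ rearranges to $(1-c)\alpha=(1+c)\beta$, equating an even and an odd function of $y$, which is possible only for $c=1$ and $\beta=0$.) Thus $\tilde F=F$, that is $\alpha-\beta=\alpha+\beta$, whence $\beta=0$ and $F$ is Riemannian; the statement for constant negative flag curvature follows since that hypothesis implies the Einstein condition.

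I expect the main obstacle to be the preparatory step rather than the final parity argument: one must rigorously establish that $F$ and its reverse possess exactly the same unparametrized geodesics and that $\tilde F$ genuinely inherits geodesic completeness together with the Einstein condition and the same negative constant, so that the cited homothety theorem is truly applicable. The curvature-reversal identity $\mathrm{Ric}_{\tilde F}(x,y)=\mathrm{Ric}_F(x,-y)$ and the careful handling of forward versus backward completeness are the points I would treat with most care; once the reverse metric is secured as a legitimate projective partner, the conclusion $\beta=0$ is immediate.
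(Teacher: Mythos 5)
Your argument breaks down at its foundational step: the claim that $F=\alpha+\beta$ and its reverse $\tilde F(x,y):=F(x,-y)$ are projectively related \emph{in the sense required by the homothety theorem of} \cite{SBi}. What reversal actually gives is an \emph{orientation-reversing} correspondence of geodesics: since the sprays satisfy $\tilde G^i(x,y)=G^i(x,-y)$, the forward $\tilde F$-geodesic issuing from $(x,y)$ has the same trace as the forward $F$-geodesic issuing from $(x,-y)$ --- for a non-reversible metric a completely different curve from the forward $F$-geodesic issuing from $(x,y)$. The notion of projective relatedness on which \cite{SBi} (and Shen's theorem \cite{Sh2}, which it recovers) is built is the orientation-preserving one; indeed this paper is explicit that ``geodesic'' means \emph{forward} geodesic in the definition of projective change, precisely because only then is projective relatedness equivalent to the spray relation $\tilde G^i=G^i+p(x,y)\,y^i$ with $p$ positively $1$-homogeneous, and the entire projective-parameter/Schwarzian machinery behind the homothety theorem lives on that relation. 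For a Randers metric one computes $\tilde G^i-G^i=(\mathrm{scalar})\,y^i-2\alpha\,s^i{}_0$, where $s^i{}_0=a^{ij}s_{jk}y^k$ is built from $d\beta$; so the spray relation between $F$ and its reverse holds if and only if $\beta$ is closed, equivalently if and only if $F$ has reversible geodesics. That is not implied by completeness and constant negative Ricci curvature; it is essentially the content of what you are trying to prove (the conclusion being $\beta=0$), so granting it begs the question, and the appeal to the quoted theorem fails exactly in the nontrivial cases. The steps you flagged as delicate (completeness of $\tilde F$, the identity $\mathrm{Ric}_{\tilde F}(x,y)=\mathrm{Ric}_F(x,-y)$ and hence the Einstein condition with the same constant, the parity argument forcing $c=1$ and $\beta=0$) are in fact all correct; the one step you treated as obvious is the one that collapses.

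For the record, the cited source \cite{BiS2} proves the statement by a mechanism that sidesteps orientation altogether: the projectively invariant pseudo-distance of \cite{BiS,SBi} is symmetric by its very chain construction, and on a complete Einstein Finsler space of negative constant Ricci scalar it coincides, up to a constant factor, with the Finslerian distance; hence $d_F$ is symmetric, which at infinitesimal scale yields $F(x,y)=F(x,-y)$, and a reversible Randers metric has $\beta=0$. If you wish to salvage your reverse-metric scheme, you would first have to deduce reversibility of the geodesics of $F$ (equivalently $d\beta=0$) from the curvature hypotheses, which is not easier than the theorem itself.
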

For more characteristics of Schwarzian derivative in Finsler geometry, see for instance \cite{B.S,BiS,BiS2,SBi}, etc.
 Using some results on \cite{Ru} and \cite{ShYa} we see 
 M\"{o}bius mappings leave invariant Einstein Finsler spaces and Finsler spaces of scalar curvatures.

 In the present work, the Schwarzian derivative on Finsler manifolds is discussed and
  the following results are obtained. Let's set a tensor field $Z$ with the components
\be\label{tensor Z}
      Z^h_{ijk}:=R^h_{ijk}-\frac{1}{n-1}(g_{ij}R^h_{k}-g_{ik}R^h_{j}).
\ee
\bth\label{Th;Z=0}
Let $(M,F)$ be a Finsler manifold. The M\"{o}bius  partial differential equations 
of the conformal factor $\varphi$, is completely integrable, if and only if the tensor $Z$ vanishes.
\eth
  Theorem \ref{Th;Z=0} leads to the following result.
\bth\label{Th;Johari}
    Let $ (M,F) $ be a complete connected Finsler manifold of constant Ricci curvature $c^2$. If $ M $ admits a nontrivial M\"{o}bius mapping, then it is homeomorphic to the n-sphere.
    \eth

   The following theorem reconfirms Thurston's hypothesis and shows that the Schwarzian derivative of a projective parameter plays a same role as the Ricci curvature on the complete Finsler manifolds.
    \bth\label{Th;Bonnet-Myers}
     Let $(M,F)$ be a connected forward-complete  Finsler manifold with constant Ricci scalar. If the Schwarzian $S(p)$ of the projective parameter $p$ satisfies  $S(p)\geqslant 2F^2\lambda$ for $\lambda>0$, then $M$ is  compact and the following holds;\\
       $(i)$ Every geodesic of length at least $ \pi/ \sqrt{\lambda} $ contains conjugate points.\\
       $ (ii) $ The diameter of $ M $ is at most $ \pi/ \sqrt{\lambda} $.\\
       $ (iii) $ The fundamental group $ \pi(M,x) $ is finite.
    \eth

\section{Preliminaries}
\subsection{Notations and elementary definitions}
Let $(M,F)$ be an $n$-dimensional connected smooth Finsler manifold. We denote by  $TM$ the tangent bundle and $\pi:TM_0\to M$, the fiber bundle of non-zero tangent vectors.
Let $(x^i,U)$ be a local coordinate system on $M$ and $(x^i,y^i)$ an element of $TM$.
  Every \emph{Finsler structure} $F$ induces a \emph{spray vector field} $ G:=y^i\frac{\partial}{\partial x^i}-G^i(x,y)\frac{\partial}{\partial y^i},$   on $TM$, where $G^i(x,y)=\frac{1}{4}g^{il}\{[F^2]_{x^ky^l}y^k-[F^2]_{x^l}\}$.
  The vector field $G$ is globally defined on $TM$ in the sense that its components remain invariant after a coordinate change.
  By $ TTM_0 $ we denote the tangent bundle of $TM_0$ and  by $ \pi^*TM $ the pull back bundle of $ \pi $.

    Consider the canonical linear mapping $ \varrho:TTM_0 \to \pi^*TM $, where $ \varrho=\pi_*$  and $ \varrho \hat{X}=X$ for all $ \hat{X}\in \Gamma(TM_0)$.
  Let us denote by $ V_zTM $ the set of \emph{vertical vectors} at $ z=(x,y) \in TM_0 $ equivalently, $ V_zTM=ker\pi_* $  and  $VTM:= \bigcup_{z \in TM_0}V_zTM$ the \emph{bundle of vertical vectors}.
      There is a horizontal distribution $HTM$ such that we have the \textit{Whitney sum} $ TTM_0=HTM\oplus VTM$.        This decomposition permits to write a vector field $\hat{X} \in TTM_0 $ into the horizontal and vertical form $ \hat{X}=H\hat{X}+V\hat{X} $, in a unique manner.

    One can observe that the pair $\{\frac{\delta}{\delta x^i},\frac{\p}{\p y^i}\}$ defined by $\frac{\delta}{\delta x^i}:=\frac{\p}{\p x^i}-G^j_i\frac{\p}{\p y^j}$, where  $ G^j_i:=\frac{\p G^j}{\p y^i}$  forms a horizontal and vertical frame for $TTM$. The  horizontal and vertical dual frame are given by the pair $\{{d x^i},{\delta y^i}\}$.
    One can show that $ 2G^i=\gamma^i_{jk}y^jy^k $, where
  $\gamma^i_{jk}=\frac{1}{2}g^{ih}(\frac{\p g_{hk}}{\p x^j}+\frac{\p g_{hj}}{\p x^k}-\frac{\p g_{jk}}{\p x^h}),$ are formal \textit{Christoffel symbols}.
  The \emph{Cartan connection}'s $1$-form is denoted by $ \omega^{i}_{j}:=\Gamma^{i}_{jk}dx^{k}+C^{i}_{jk}\delta y^{k},$ where
   \be \label{Eq;Chris.Sym.}
 \Gamma^{i}_{jk}:=\frac{1}{2}g^{il}(\delta _{j}g_{lk}+\delta_{k}g_{jl}-\delta _{l}g_{jk}),\quad  C^{i}_{jk}:=\frac{1}{2}g^{il}\dot{\partial}_{l}g_{jk},
   \ee
 and
 $ \delta _{i}:=\frac{\delta}{\delta x^{i}} $,
 $ \dot{\p}_{i}:=\frac{\partial}{\partial y^{i}}.$
In a local coordinate system,
the \textit{horizontal} and \textit{vertical}  Cartan covariant derivatives of an arbitrary $(1,1)$-tensor field on $\pi^*TM$ with the components $T_{i}^{j}$ are denoted here by
\[
 {}^c\nabla_{k}T_{i}^{j}=\delta_{k}T_{i}^{j}-T_{r}^{j}\Gamma ^{r}_{ik}+T_{i}^{r}\Gamma ^{j}_{rk},\qquad
 {}^c\dot{\nabla}_{k}T_{i}^{j}=\dot{\partial }_{k}T_{i}^{j}-T_{r}^{j}C ^{r}_{ik}+T_{i}^{r}C ^{j}_{rk}.\nn
\]
We denote here  by $\overset{\ _*}{R}^i_{jkm}$ the components of \emph{Cartan hh-curvature} tensor.
  They are related to the components of \emph{Chern hh-curvature} tensor $R^i_{jkm}$ by  the following relation \cite{JoBi}.
\begin{equation} \label{R}
\overset{\ _*}{R}^{i}_{jkm}=R^i_{jkm} +R^{s}_{km}C^{i}_{s j},
\end{equation}
where, $R^{i}_{km}:=y^j R^{i}_{ jkm}=\frac{\delta G^i_k}{\delta x^m}-\frac{\delta G^i_m}{\delta x^k} $ and $ R^i_{jkm}=\frac{\delta\Gamma^{i}_{jm}}{\delta  x^k}-\frac{\delta\Gamma^{i}_{jk}}{\delta x^m}+
\Gamma^{i}_{s k}\Gamma^{s}_{jm}-\Gamma^{i}_{s m}\Gamma^{s}_{jk} $.
For a non-null $ y\in T_xM $,   trace of the  hh-curvature  called the \textit{Riemann curvature} are given by
$ R_y(u)=R^i_ku^k\frac{\partial}{\partial x^i},$     where
 $ R^i_{k}:=g^{mj}R^i_{mjk} $.
 \label{Re;R=R^*}
       Multiplying the components of the hh-curvature tensor of Cartan connection $\overset{\ _*}{R}^i_{jkm}$ in \eqref{R} by $ y^j $ yields
       $ \overset{\ _*}{R}^{i}_{km}=R^i_{km}+0 $. Again contracting this equation by $ y^k $ we have
        \begin{equation}\label{R*}
        \overset{\ _*}{R}^{i}_{m}=R^i_{m}.
         \end{equation}
    The \textit{Ricci scalar} is defined by $ Ric:=R^i_i $,\, see \cite[p.331]{BCS}. Here, we use Akbar-Zadeh's definition of \textit{Ricci tensor} as follows
  $Ric_{ik}:=1/2(F^2Ric)_{y^iy^k} $, see \cite{Ak}.
   Let $l^i:=\frac{y^i}{F}$, be a unitary 0-homogenous vector field,  by homogeneity we have $Ric_{ij} l^il^j=Ric$.
    A Finsler structure  $F$ is called \emph{forward (resp. backward) geodesically complete}, if every  geodesic on an open interval $(a, b)$ can be extended to a geodesic on $(a,\infty)$ (resp. $(-\infty, b)$).
     A Finsler structure is called \emph{``complete"} if it is forward and backward complete.

 Let $\tilde{F} $ be another Finsler structure on $M$. If any geodesic of $(M,F)$  coincides with a geodesic of $(M,\tilde{F})$ as set of points and vice versa, then the change  $F\rightarrow \tilde{F}$ of the metric is called \emph{projective} and $F$ is said to be \emph{projective} to $\tilde{F}$.
   In the definition of projective changes we deal with the forward geodesics and  the word ``geodesic"    refers to the forward geodesic.
   A Finsler space $(M,F)$  is projective to another Finsler space $(M,\tilde{F})$, if and only if there exists a  1-homogeneous scalar field $p(x,y)$ satisfying
       $ \tilde{G}^i(x , y)=G^i(x , y)+p(x,y)y^i$.
      The scalar field $p(x,y)$ is called the \emph{projective factor} of the projective change under consideration.

      Let $F$ and $\bar{F}$ be two Finsler structures on an n-dimensional manifold $M$. A diffeomorphism $f:(M,F)\to(M,\bar{F})$ is called \textit{conformal transformation}  or  simply a \textit{conformal change} of metric,
       if and only if there is a scalar function $\varphi(x)$ on $M$ called the \emph{conformal factor} such as $\bar{F}(x,y)=e^{\varphi(x)}F(x,y)$.
Assuming $\bar{F}(x,y)=e^{\varphi(x)}F(x,y)$ we have equivalently
$
\bar{g}_{ij}(x,y)=e^{2\varphi(x)}g_{ij}(x,y)$ and $ \bar{g}^{ij}(x,y)=e^{-2\varphi(x)}g^{ij}(x,y)
$
where $ g^{ij} $ is the inverse matrix defined by $ g_{ij}g^{ik}=\delta^k_j $.
The diffeomorphism $f$ is said to be \textit{homothetic} if $\varphi$ is constant and \textit{isometric} if $ \varphi $ vanishes in every point of $M$.
 A conformal transformation is called \textit{C-conformal} if the following condition holds
     $
      \varphi_hC^h_{ij}=0
     $,
     where $ \varphi_h=\frac{\partial \varphi}{\partial x^h} $.\\
 Throughout this article, the objects of $({M}, \bar{F})$ are decorated with a bar and we shall always assume that the line elements $(x,y)$ and $(\bar{x},\bar{y})$ on $(M,F)$ and $({M},\bar{F})$ are chosen such that $ \bar{x^i}=x^i$ and $ \bar{y^i}=y^i $ holds, unless a contrary assumption is explicitly made.

Let $(x,y)$ be an element of $ TM$ and  $P(y,X)\subset T_xM $ a 2-plane generated by the vectors $y$ and $X$ in $T_xM $. The  \textit{flag curvature} $ \kappa(x,y,X) $ with respect to the plane $ P(y,X) $ at a point $ x\in M $ is defined by \[\kappa(x,y,X):=\frac{g_(R(X,y)y,X)}{g(X,X)g(y,y)-g(X,y)^2},\] where $ R(X,y)y $ is the hh-curvature tensor. If $ \kappa $ is independent of $ X $, then $ (M,F) $ is called  \textit{ of scalar curvature space}.
If $ \kappa $ has no dependence on $ x $ or $ y $, then it is called  of \textit{constant curvature}, cf. \cite{BCS}. A Finsler structure $F$  on the smooth $n$-dimensional manifold $M$ is called a \textit{Randers metric} if  $ F=\alpha+\beta $, where $\alpha(x,y):=\sqrt{a_{ij}y^iy^j},$  is a Riemannian metric and $\beta(x,y):=b_i(x)y^i,$ is a 1-form. For a detailed  study of Randers metric on Finsler geometry, one can refer to the book \cite{ChSh}.\\
Fix a tangent vector field $T\in T_pM$ and consider the constant speed geodesic $\sigma(t)=exp_p(tT)$, $0\leqslant t\leqslant r$ that emanates from $p=\sigma(0)$ and terminates at $q=\sigma(r)$.
 If there is no risk of confusion, also we indicate its  speed field by $T$.
  Let $D_T$ denote the covariant differentiation along $\sigma$, with reference vector $T$. Recall that a vector field $J$ along $ \sigma $ is said to be a \textit{Jacobi field} if it satisfies the equation $D_TD_TJ+R(J,T)T=0.$
 We say that the point $q\in M$ is \textit{conjugate} to the point $p\in M$ along the geodesic $\sigma$ if there exists a nonzero Jacobi field $J$ along $\sigma$ which vanishes at the both points $p$ and $q$. See \cite[p.173,174]{BCS}.
 A  \textit{fundamental group} at a fixed point $x\in M$   is a natural and informative group consisting of the equivalence classes of homotopy of the loops on the underlying topological space. It contains basic information about the  topology of the  space, like shape, or number of holes. We denote the fundamental group  at a fixed point $x\in M$ by $\pi(M,x)$.
\subsection{ Schwarzian tensor and M\"{o}bius mapping}
Let $h:M\to\mathbb{R}$ be a smooth real function on an $n$-dimensional $(n\geqslant2)$ Finsler manifold $(M,F)$. At a point $p$, we indicate the vector field \textit{gradient} of $h$  by $\nabla h(p)=grad\, h(p)\in \pi^*TM$  which is defined for all $  v\in T_pM,$ by
 $g_{_{grad\,h(p)}}(v, grad\, h(p))=dh_p(v),$ where $dh:=\frac{\partial h}{\partial x^i}dx^i$ is the differential of $h$.
In terms of a local coordinate system, we have $grad\, h:=h^i(x)\frac{\partial}{\partial x^i}\in \pi^*TM$, where $h^i(x)=g^{ij}(x, grad\, h(x))\frac{\p h}{\partial x^j}$.

 For each vector field $Y=Y^i\frac{\partial}{\partial x^i} \in \pi^*T M$, the \emph{horizontal divergence} and the \textit{vertical divergence} of $Y$ are scalar functions on $M$ defined by the contraction of their covariant derivatives, $div^h Y: =\frac{\partial Y^i}{\partial x^i}+\Gamma^i_{ij}Y^j$ and $ div^vY:=C^i_{ij}Y^j $ respectively, in a local coordinate system.


For a real smooth function $h$ on $M$, we have defined the \emph{Hessian} of $h$
 in the Cartan case, as follows, see \cite[p.883]{B.S}
 \begin{align*}
  Hess(h)(\hat{X},Y)=(\hat{X}Y)h-({}^c\nabla_{\hat{X}}Y)h.
 \end{align*}
In a local coordinate system it is written
 \begin{align*}
\big(Hess(h)\big)_{ij}=\frac{\p^2 h}{\p x^i\p x^j}-(\Gamma^k_{ij}+C^k_{ij})\frac{\p h}{\p x^k},
 \end{align*}
where, the Christoffel symbols   $\Gamma^h_{ij}$ and the Cartan torsion $C^h_{ij}$ are given by \eqref{Eq;Chris.Sym.}. As usual, in Finsler space, the \emph{Laplacian} for a real function $h$ on $M$ is defined by the trace of Hessian
 $\Delta h=g^{ij}\{\frac{\p^2 h}{\p x^i\p x^j}-(\Gamma^k_{ij}+C^k_{ij})\frac{\p h}{\p x^k}\}$.


Let $f:(M,F)\to(M,\bar{F})$ be a conformal transformation such that $\bar{F}(x,y)=e^{\varphi(x)}F(x,y)$ where, $\varphi:M\to\mathbb{R}$ is a smooth function on $M$.
  The \emph{Schwarzian derivative} of a conformal map $f:(M,F)\to (M,\bar{F})$ with $\bar{F}=e^{ \varphi}F$, at a point $ x\in M $, is a linear map
  \begin{align*}
   S_F(f):\Gamma(TM_0)  &\longrightarrow \Gamma(\pi^*TM) ,\\
  S_F(f) \hat{ X}&= {}^c\nabla_{\hat{X}}(\nabla \varphi)-g(\nabla\varphi,\varrho\hat{X})\nabla\varphi-\frac{1}{n}(\Delta\varphi-\|\nabla\varphi\|^2) \varrho \hat{X},
  \end{align*}
  where  $ \hat{X}\in \Gamma(TM_0)$ and $\varrho \hat{X}=X$. For more details see \cite{B.S}.

We say that the equation $S_F(f)\hat{ X}=0$ or equivalently
  \bes\label{Eq;Schw.Der}
  {}^c\nabla_{\hat{X}}(Y)-g(Y,\varrho\hat{X})Y-\frac{1}{n}(div Y- \|Y\|^2) \varrho \hat{X}=0,
  \ees
 is \textit{completely integrable} at $ x\in M $ if for every $ Y\in T_xM $, there is a local solution $\varphi(x)$ where $grad \ \varphi(x)=Y$.

   The \emph{Schwarzian tensor} $B_{_F}(\varphi)$ of a smooth function  $\varphi:M\to\mathbb{R}$ on $(M,F)$ is a  symmetric traceless $(0,2)$-tensor field  defined by
 \begin{align}\label{B-phi}
 B_{_F}(\varphi)(\hat{X},Y)=\textrm{Hess}(\varphi)(\hat{X},Y)-(d\varphi\otimes d\varphi)(\varrho\hat{X},Y)-\frac{1}{n}(\Delta\varphi-\|grad\varphi\|^2)g(\varrho\hat{X},Y),
 \end{align}
 for all $\hat{X}\in \Gamma(TM_0)$ and $Y\in \Gamma(\pi^*TM)$  where   $\|grad\varphi\|^2=\varphi^i\varphi_i$, \, $\varphi^i=g^{ij}\varphi_j$  and
 $g$ is the inner product on $\pi^*TM$ derived from the Finsler structure $F$, see \cite[p.885]{B.S}.\\

   A conformal diffeomorphism $f:(M,F)\to (M,\bar{F}),$ is called a \emph{M\"{o}bius mapping}, if the Schwarzian derivative $S_{_F}(f)$ vanishes.  By means of $ S_F(f)=B_F{(\varphi)} $, one can show that  $f$ is a M\"{o}bius mapping, if and only if the Schwarzian tensor   $B_F(\varphi)$, vanishes. \\
   In terms of a local coordinate system, \eqref{B-phi} becomes
     \begin{equation}\label{a8}
     \big(B_F(\varphi)\big)_{ij}={}^c\nabla_i\varphi_j-\varphi_i\varphi_j
     -\frac{1}{n}(\Delta\varphi-\|grad\varphi\|^2)g_{ij},
      \end{equation}
    where, ${}^c\nabla_i\varphi_j:=\frac{\partial^2\varphi}{\partial x^i\partial x^j} - ( \Gamma^h_{ij}+C^h_{ij})\varphi_h$ are the components of Cartan h-covariant derivative.


   \section{Finsler manifolds of constant curvature}
      Here, we find an integrability condition for the system of  
      M\"{o}bius  partial differential equations $ (B(\varphi))_{ij}=0 $.\\
  \emph{\textbf{Proof of Theorem \ref{Th;Z=0}}}.
    Let $(M,F)$ be a Finsler manifold admitting a non-trivial M\"{o}bius mapping. By definition, the Schwarzian derivative vanishes and  \eqref{a8} leads to
    \bes\label{Eq;secondOrder}
    {}^c\nabla_j\varphi_i - \varphi_i \varphi_j=\Phi g_{ij},
    \ees
    where we set  $\Phi=\frac{1}{n} (\Delta\varphi - ||grad\varphi||^2)$.
 Applying the Cartan horizontal derivative to the both sides of the last equation and replacing again ${}^c \nabla_j\varphi_i $ yields
     \begin{equation}\label{a11}
   {}^c\nabla_k{}^c\nabla_j\varphi_i=2\varphi_i\varphi_j\varphi_k+\Phi(g_{ik}\varphi_j+g_{jk}\varphi_i)+\Phi_kg_{ij},
   \end{equation}
    where, $ \Phi_k:={}^c\nabla_k\Phi=\delta\Phi/\delta x^k $.
    Consider the following well-known Ricci identity: cf. \cite[p.121]{An},
           \begin{equation}\label{Ric identity}
            {}^c\nabla_k{}^c\nabla_j\varphi_i-{}^c\nabla_j{}^c\nabla_k\varphi_i=
            -\overset{\ _*}{R}^h_{ijk}\varphi_h-R^h_{jk}\varphi_{h;i},
           \end{equation}
            where, $ \varphi_{h;i}=\dot{\partial}_i\varphi_h-C^s_{hi}\varphi_s=-C^s_{hi}\varphi_s $.
     Replacing \eqref{a11} in the Ricci identity  we get
   \begin{equation}\label{a2}
    g_{ij}(\Phi_k-\varphi_k\Phi)-g_{ik}(\Phi_j-\varphi_j\Phi)=
    -\overset{\ _*}R^h_{ijk}\varphi_h-R^h_{jk}\varphi_{h;i}.
    \end{equation}
     Substituting $ \varphi_{h;i} $ and \eqref{R}   in \eqref{a2} yields
 \begin{equation}\label{a3}
   g_{ij}(\Phi_k-\varphi_k\Phi)-g_{ik}(\Phi_j-\varphi_j\Phi)=-R^h_{ijk}\varphi_h.
    \end{equation}
   Contracting  by $ g^{ij} $ we obtain $ (n-1)(\Phi_k-\varphi_k\Phi)=-R^h_{\ k}\varphi_h $. Replacing in \eqref{a3} yields
  \begin{equation}\label{a4}
    \frac{1}{n-1}(g_{ij}R^h_{k}-g_{ik}R^h_{j})\varphi_h=R^h_{ijk}\varphi_h.
    \end{equation}
  Using the last equation, we consider a tensor field $Z$ with the components $ Z^h_{ijk} $ defined by the equation \eqref{tensor Z}
   where,  $R^h_{ijk}$ is the $hh$-curvature of Chern connection. Equation \eqref{a4} yields $ Z^h_{ijk}\varphi_h=0$.
The M\"{o}bius partial differential equations  ${}^c\nabla_j\varphi_i = \varphi_i \varphi_j+\Phi g_{ij}$ is completely integrable, if and only if
       \bes
       {}^c\nabla_k{}^c\nabla_j\varphi_i-{}^c\nabla_j{}^c\nabla_k\varphi_i=0.
        \ees
        Replacing \eqref{R} in the Ricci identity \eqref{Ric identity},  we obtain
\be\label{nabla,R}
{}^c\nabla_k{}^c\nabla_j\varphi_i-{}^c\nabla_j{}^c\nabla_k\varphi_i=
            -{R}^h_{ijk}\varphi_h.
\ee
From the last two equations, we get $ {R}^h_{ijk}\varphi_h=0$. By definition of complete integrability, this relation holds for any initial data $\varphi_h(x)$, hence ${R}^h_{ijk}=0$.
 Therefore  $ R^h_j=0 $, and  $ Z^h_{ijk}=0 $.

 Conversely, let $ Z=0 $, by definition  we have
       \be\label{R2}
       R^h_{ijk}=\frac{1}{n-1}(g_{ij}R^h_{k}-g_{ik}R^h_{j}).
       \ee
Contracting the both sides of \eqref{R2} with $y^i$   and  using  $R^{i}_{km}:=y^j R^{i}_{ jkm}$ yields
\bes
R^h_{jk}=\frac{1}{n-1}(y_{j}R^h_{k}-y_{k}R^h_{j}).
\ees
 Contracting again   with $y^k$ and making use of  $ R^h_ky^k=0 $,  cf. \cite[p.55,57]{BCS}, we obtain
 \bes
  R^h_{j}=\frac{-F^2 R^h_{j}}{n-1},
 \ees
 hence  $ R^h_j=0 $. By means of \eqref{R2}, we get $ R^h_{ijk}=0 $ and  \eqref{nabla,R} leads
 $ {}^c\nabla_k{}^c\nabla_j\varphi_i-{}^c\nabla_j{}^c\nabla_k\varphi_i=0 $. Therefore the  M\"{o}bius partial differential equations $ (B(\varphi))_{ij}=0 $, is completely integrable and proof of Theorem \ref{Th;Z=0} is complete.
   \hspace {\stretch{1}}$\Box$

 \bco\label{Th;constant curv}
 Let  $(M,F)$  be a Finsler manifold. If it is of constant curvature, then the $Z$ tensor vanishes.
 \eco
 \bpf   Let $(M,F)$ be a Finsler manifold, the components of Cartan $hh$-curvature tensor $\overset{\ _*}{R}_{ijkl}$ are given by
  \[\overset{\ _*}{R}_{ijkl}=\kappa(g_{ik}g_{jl}-g_{il}g_{jk})+\kappa F^2Q_{ijkl}+1/2\nabla_0\nabla_0Q_{ijkl}, \]
   where, $Q_{ijkl}$ are the components of $vv$-curvature of Cartan connection.
   If  the flag curvature $\kappa$ is constant, then one can see that $Q_{ijkl}=0$,  cf. \cite[p.26]{Ak} and we have
  \be\label{Eq;Constflag+1}
     \overset{\ _*}{R}^{h}_{ijk}=\kappa(g_{ij}\delta^h_k-g_{ik}\delta^h_j).
   \ee
          Contracting the both sides of the last equation with $g^{ij}$, and using \eqref{R*} yields
           \be\label{Eq;Constflag+2}
          \overset{\ _*}{R}^h_k =R^h_k=\kappa(n-1)\delta^h_k.
           \ee
   The equation \eqref{Eq;Constflag+1} holds well for the  $hh$-curvature tensor of Chern(Rund) connection, \cite[p.109]{An}.
   Replacing the last two equations \eqref{Eq;Constflag+1} and \eqref{Eq;Constflag+2} in \eqref{tensor Z} yields
\bes
 Z^h_{ijk}=\kappa(g_{ij}\delta^h_k - g_{ik}\delta^h_j) - \kappa(g_{ij}\delta^h_k - g_{ik}\delta^h_j)=0,
 \ees
 and we have proof of this corollary.
 \epf
\section{Applications of integrability condition  of Schwarzian derivative}
By studying the integrability condition, the following results are obtained.\\

       \emph{\textbf{Proof of Theorem \ref{Th;Johari}.}}~
        Let $(M,F)$ be a connected complete Finsler $n$-manifold of constant Ricci curvature $c^2$, admitting a non-trivial M\"{o}bius mapping.
         The Schwarzian integrability condition $S_F(f)\hat{X}=$0 and  \eqref{a8} lead  to
    \be\label{a9+}
     {}^c\nabla_i\varphi_j - \varphi_i \varphi_j=\Phi g_{ij},
     \ee
    where we put  $\Phi=1/n (\Delta\varphi - ||grad\varphi||^2)$.
    Therefore  $(M,F)$ admits a non-trivial M\"{o}bius mapping which is a non-trivial conformal change of metric  $\bar{g}=e^{2\varphi}g $, satisfying the M\"{o}bius equation \eqref{a9+}. After changing the variable $\rho=e^{-\varphi}$, in the equation \eqref{a9+},  a simple calculation yields   $\varphi_l=-\rho_l/\rho$, where $\rho $ is a positive real function on $M$ and  $ \rho_l=\partial \rho/\partial x^l$. Hence, we have \[{}^c\nabla_k\varphi_l=-\frac{\rho\,{^c}\nabla_k\rho_l-\rho_k\rho_l}{\rho^2}. \]
   Replacing these two terms in  the M\"{o}bius equation \eqref{a9+}
yields,
\begin{equation}\label{nabla rho}
 {}^c\nabla_k\rho_l=\phi g_{lk},
 \end{equation}
 where $\phi=-\rho\Phi$. Therefore, vanishing of the M\"{o}bius equation $B_F(\varphi)=0$, is equivalent to the equations  \eqref{a9+} and \eqref{nabla rho}.
Replacing $ \phi=-c^2\rho $,  the  equation \eqref{nabla rho} becomes ${}^c\nabla_j\rho_k+c^2\rho g_{jk}=0$.
    The following theorem in \cite{JoBi}  completes the proof of Theorem \ref{Th;Johari}. \hspace{\stretch{1}}$\Box$
     \begin{theo}\cite{JoBi}\label{the;Johari}
     Let $ (M,g) $ be a complete connected Finsler manifold of constant Ricci curvature $ c^2 $. If $ M $ admits a non constant function $ \rho $ satisfying the ODE;  $${}^c\nabla_i{}^c\nabla_j\rho+c^2\rho g_{ij}=0,$$
      then $ M $ is homeomorphic to the n-sphere.
     \end{theo}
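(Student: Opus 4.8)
The plan is to run an Obata-type argument, reducing the topological conclusion to a count of the critical points of $\rho$ and then invoking Reeb's theorem. The starting point is the observation that the hypothesis ${}^c\nabla_i{}^c\nabla_j\rho + c^2\rho\, g_{ij} = 0$ completely controls $\rho$ along every geodesic. Fix a unit-speed geodesic $\sigma(t)$ with velocity $T$ and set $f(t) := \rho(\sigma(t))$. Using the geodesic equation $\ddot{x}^i + 2G^i = 0$ together with $\Gamma^i_{jk}T^jT^k = 2G^i$ and $C^k_{ij}T^iT^j = 0$ (the Cartan tensor vanishes when contracted with the reference direction $T$), a direct computation shows $f''(t) = \big(\mathrm{Hess}(\rho)\big)_{ij}T^iT^j = {}^c\nabla_i\rho_j\, T^iT^j$. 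The hypothesis then gives $f''(t) = -c^2\rho\, g_{ij}T^iT^j = -c^2 f(t)$, since $g_{ij}T^iT^j = 1$ along a unit-speed geodesic. Hence $f(t) = f(0)\cos(ct) + \tfrac{f'(0)}{c}\sin(ct)$ on every geodesic.

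First I would establish compactness: since $(M,F)$ is complete with constant Ricci curvature $c^2 > 0$, the Finsler Bonnet--Myers theorem forces $M$ to be compact with diameter at most $\pi/c$. Consequently $\rho$ attains a maximum at some $p$ and a minimum at some $q$, and at each critical point $r$ we have $\mathrm{grad}\,\rho(r) = 0$, hence $d\rho_r = 0$. I would first rule out critical points with $\rho(r) = 0$: at such a point $f(0) = 0$ and $f'(0) = 0$ along every geodesic from $r$, so $f \equiv 0$, forcing $\rho \equiv 0$ on the connected manifold $M$ and contradicting non-constancy. Thus every critical point has $\rho(r) \neq 0$, and the hypothesis shows $\big(\mathrm{Hess}\,\rho\big)_r = -c^2\rho(r)\, g$ is definite: negative definite (index $n$, a non-degenerate maximum) when $\rho(r) > 0$ and positive definite (index $0$, a non-degenerate minimum) when $\rho(r) < 0$. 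In particular $\rho$ is a Morse function whose critical points all have index $0$ or $n$; none has index $1$ or $n-1$.

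The crucial consequence is that $\rho$ has exactly two critical points. Since $M$ is connected and $\rho$ has no index-$1$ critical point, the sublevel sets $\{\rho \le s\}$ can never merge two distinct connected components as $s$ increases through a critical value; as each index-$0$ point gives birth to a new component, connectedness of $M$ forces a single minimum. Applying the same reasoning to $-\rho$, whose Hessian is $+c^2\rho\, g$ and which therefore also has no index-$1$ critical point (equivalently, $\rho$ has no index-$(n-1)$ point, using $n \ge 2$), yields a single maximum. Hence $\rho$ has precisely one maximum $p$ and one minimum $q$, two non-degenerate critical points in all.

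Finally I would invoke Reeb's theorem: a closed manifold admitting a smooth function with exactly two non-degenerate critical points is homeomorphic to the $n$-sphere. This is a purely topological statement and applies verbatim to the underlying smooth manifold of $(M,F)$, giving $M \cong S^n$ and completing the proof. The main obstacle is the Finsler-specific identity $f''(t) = {}^c\nabla_i\rho_j\, T^iT^j$ relating the second derivative of $\rho$ along a geodesic to the Cartan Hessian; the reference-vector dependence of the metric and connection coefficients must be handled carefully, but the vanishing $C^k_{ij}T^iT^j = 0$ is precisely what makes the Cartan Hessian agree with the naive second derivative along the geodesic. Once this identity and the Finsler Bonnet--Myers and Hopf--Rinow theorems are in hand, the remaining critical-point and Morse-theoretic arguments are insensitive to the Finsler (as opposed to Riemannian) structure.
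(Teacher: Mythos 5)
The paper itself contains no proof of this statement: it is imported verbatim from \cite{JoBi} and used as a black box to finish the proof of Theorem \ref{Th;Johari}, so there is nothing in-paper to compare line by line. Judged on its own, your argument is a correct, self-contained proof along the classical Obata--Reeb lines, and its Finsler-specific steps check out. The reduction to $f''+c^2f=0$ along a unit-speed geodesic is right for exactly the reasons you give: $\Gamma^i_{jk}(x,T)T^jT^k=2G^i$ (the Cartan-tensor corrections in $\Gamma^i_{jk}$ die after double contraction with the reference vector), $C^k_{ij}T^iT^j=0$, and $g_{ij}(x,T)T^iT^j=F^2=1$, so the reference-vector dependence of ${}^c\nabla_i\rho_j$ is harmless on the diagonal. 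At a critical point $d\rho_r=0$, the hypothesis collapses to $\partial_i\partial_j\rho(r)=-c^2\rho(r)\,g_{ij}(r,y)$, which is the intrinsic Morse Hessian and is definite once $\rho(r)\neq 0$; your exclusion of critical points with $\rho(r)=0$ via uniqueness of solutions of $f''=-c^2f$ together with Hopf--Rinow surjectivity of $\exp_r$ is exactly the needed step to get nondegeneracy. The handle-counting argument (no index-$1$ and, for $n\geq 2$, no merging by $n$-handles, hence one minimum and one maximum) and Reeb's theorem are pure differential topology, insensitive to the Finsler structure, as you correctly observe. Two small points you should make explicit: the statement's ``constant Ricci curvature $c^2$'' must be read as $c\neq 0$, since for $c=0$ your Bonnet--Myers step and your sinusoidal solution formula both fail and the conclusion is false (a linear function on Euclidean space satisfies ${}^c\nabla_i{}^c\nabla_j\rho=0$); and the Morse-merging argument needs $n\geq 2$, which you flag but should state as a standing hypothesis (for $n=1$ the conclusion holds trivially). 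With those normalizations recorded, your proof is complete and is, in spirit, the standard route one would expect the cited source to follow.
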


     \subsection{An integrability condition for Finsler manifolds of scalar curvature}
           Here we study an integrability condition for  the Schwarzian tensor $ B_F(\varphi) $, related to the Finsler manifolds of scalar curvature.
     \begin{thm}\label{thm;z=0}
     The M\"{o}bius partial differential equation $B_F(\varphi)=0 $ is completely integrable if and only if
     $Z^h_{jk}=0$, where
    \begin{equation}\label{Z scalar}
     Z^h_{jk}=R^h_{jk}-F^{-2}(y_jR^h_k-y_kR^h_j).
     \end{equation}
    \end{thm}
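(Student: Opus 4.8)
The plan is to run the argument of Theorem~\ref{Th;Z=0} one contraction deeper, trading the full horizontal obstruction $R^h_{ijk}\varphi_h$ for its flag-direction trace. First I would start from the M\"obius equation ${}^c\nabla_j\varphi_i=\varphi_i\varphi_j+\Phi g_{ij}$ and reproduce verbatim the differentiation and the substitution of the Ricci identity \eqref{Ric identity} carried out in the proof of Theorem~\ref{Th;Z=0}, so as to reach the intermediate relation \eqref{a3}, namely $g_{ij}(\Phi_k-\varphi_k\Phi)-g_{ik}(\Phi_j-\varphi_j\Phi)=-R^h_{ijk}\varphi_h$. Setting $A_k:=\Phi_k-\varphi_k\Phi$ and contracting with $y^i$ (using $g_{ij}y^i=y_j$ and the definition $R^h_{jk}=y^iR^h_{ijk}$) collapses this to the single flag identity $y_jA_k-y_kA_j=-R^h_{jk}\varphi_h$.

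The heart of the computation is then to eliminate $A_k$. I would contract the flag identity once more with $y^k$ and invoke the two standard facts $R^h_ky^k=0$ and $y_ky^k=F^2$, together with the flag-curvature identity relating $y^kR^h_{jk}$ to the Riemann curvature $R^h_j$; this solves for the covector $A_j$ in terms of $R^h_j\varphi_h$ and the single radial scalar $a:=y^kA_k$, in the form $F^2A_j=y_ja-R^h_j\varphi_h$. Re-inserting this expression into the flag identity, the symmetric terms $y_jy_k\,a$ cancel and the undetermined scalar $a$ disappears, leaving exactly
\[
\big(R^h_{jk}-F^{-2}(y_jR^h_k-y_kR^h_j)\big)\varphi_h=Z^h_{jk}\varphi_h=0,
\]
with $Z^h_{jk}$ the tensor of \eqref{Z scalar}. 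This is the flag analogue of equation \eqref{a4}, and it is the necessary condition carried by every M\"obius conformal factor.

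The two implications then follow the template of Theorem~\ref{Th;Z=0}. For necessity, complete integrability means $Z^h_{jk}\varphi_h=0$ must hold for every admissible initial datum $\varphi_h(x)=Y_h$; letting $Y$ range over $T_xM$ forces $Z^h_{jk}=0$. For sufficiency I would assume $Z^h_{jk}=0$, read \eqref{Z scalar} as $R^h_{jk}=F^{-2}(y_jR^h_k-y_kR^h_j)$, and substitute this together with \eqref{R} into the commutator form \eqref{nabla,R} of the Ricci identity; the flag-contracted obstruction then vanishes for arbitrary data, the mixed horizontal derivatives commute, and the M\"obius system $B_F(\varphi)=0$ is completely integrable.

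I expect the main obstacle to be the bookkeeping in the second contraction: one has to check that the radial scalar $a=y^kA_k$ genuinely drops out, and that the signs arising from the $y$-contraction of $R^h_{jk}$ combine with $R^h_ky^k=0$ to reproduce precisely the minus sign and the $F^{-2}$ normalization of \eqref{Z scalar}, rather than the $\tfrac{1}{n-1}$ normalization of the full tensor \eqref{tensor Z}. The second delicate point is sufficiency: since the flag obstruction $Z^h_{jk}\varphi_h$ controls only the $y$-contracted curvature and not the full $R^h_{ijk}$, I would need to confirm that, once the M\"obius relation has been used, the Frobenius/Ricci-identity test indeed requires nothing beyond the flag direction, so that $Z^h_{jk}=0$ is exactly sufficient.
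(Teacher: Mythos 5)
Your proposal reproduces the paper's own proof essentially step for step in the forward direction: the chain M\"obius equation $\rightarrow$ horizontal Cartan derivative $\rightarrow$ Ricci identity $\rightarrow$ \eqref{a3}, the contraction with $y^i$ giving \eqref{b12}, the second contraction solving for $A_k=\Phi_k-\varphi_k\Phi$ exactly as in \eqref{b2} (the paper contracts with $y^j$, you with $y^k$ --- immaterial by the antisymmetry of \eqref{b12}), the back-substitution in which the radial scalar $a=y^kA_k$ cancels to leave $Z^h_{jk}\varphi_h=0$, and the necessity argument from arbitrariness of the initial data $\varphi_h$ are precisely the paper's steps, including the signs and the $F^{-2}$ normalization you were worried about.

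The one point of divergence is sufficiency, and the ``delicate point'' you flag there is not a defect of your write-up relative to the paper --- it is a genuine soft spot in the paper's own argument. The paper's converse contracts $R^h_{jk}=F^{-2}(y_jR^h_k-y_kR^h_j)$ with $y^k$, uses $y^kR^h_k=0$ to obtain $R^h_j=0$ (hence $R^h_{jk}=0$), and then asserts ``from \eqref{R2} we get $R^h_{ijk}=0$,'' after which \eqref{nabla,R} gives complete integrability. But \eqref{R2} is exactly the identity $Z^h_{ijk}=0$ of Theorem~\ref{Th;Z=0}, which is not among the hypotheses of the present theorem: vanishing of the $y$-contracted curvature $R^h_{jk}$ does not by itself annihilate the full Chern hh-curvature $R^h_{ijk}$, while the Frobenius obstruction in \eqref{nabla,R} involves the full tensor. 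So your hesitation --- whether the integrability test ``requires nothing beyond the flag direction'' --- identifies the precise step that any honest completion of the converse must justify, e.g.\ by recovering $R^h_{ijk}$ from $R^h_{jk}$ through vertical differentiation, or by strengthening the hypothesis; the paper instead closes it by an appeal to \eqref{R2} that is not licensed by the assumptions.
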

  \begin{proof}
  Let us consider the partial differential $ B_{ij}(\varphi)=0 $, which is equivalent to the equation \eqref{a9+}. The horizontal Cartan covariant derivative of $ {}^c\nabla_i\varphi_j - \varphi_i \varphi_j=\Phi g_{ij} $ and the Ricci identity \eqref{Ric identity}, together with a similar procedure as in the proof of Theorem \ref{Th;Z=0}, yields \eqref{a3}. Contracting \eqref{a3} by $ y^i $ gives
  \begin{equation}\label{b12}
              y_j(\Phi_k-\varphi_k\Phi)-y_k(\Phi_j-\varphi_j\Phi)=-R^h_{jk}\varphi_h,
    \end{equation}
   where, $ y_i=g_{ij}y^j $. Again contracting \eqref{b12} by $ y^j $ we have
  \begin{equation}\label{b2}
  (\Phi_k-\varphi_k\Phi)=F^{-2}(y_k(\Phi_0-\varphi_0\Phi)-R^h_{\ k}\varphi_h),
    \end{equation}
   where $ \Phi_0=\Phi_jy^j $, $ \varphi_0=\varphi_jy^j $ and $ R^i_k=R^i_{0k} $.
    Substituting \eqref{b2} in \eqref{b12} we get
     \begin{equation*}
    (R^h_{jk}-F^{-2}(y_jR^h_k-y_kR^h_j) )\varphi_h=0.
    \end{equation*}
     By means of  \eqref{Z scalar}, it yields $ Z^h_{jk}\varphi_h=0 $. If the partial differential $ B_{ij}(\varphi)=0 $, is completely integrable then the relation $ Z^h_{jk}\varphi_h=0 $ satisfies with any initial data $ \varphi_h $, therefore $ Z^h_{jk}=0$.
     Conversely, let $Z^h_{jk}=0$, the equation \eqref{Z scalar} yields
\[
R^h_{jk}=F^{-2}( y_j R^h_k- y_k R^h_j).
\]
Contracting the both sides of the last equation with $y^k$ and using $y^k R^h_k=0$ leads
$R^h_j=F^{-2}(-F^2R^h_j)$, hence $R^h_j=0$.
From \eqref{R2}   we get $R^h_{ijk}=0$ and \eqref{nabla,R} results  the Mobius partial differential equations $B_F(\varphi)=0$ is completely integrable and we have the proof.
\end{proof}
 Now we are in a position to prove the following corollary.
 \bco\label{Th;a,b,c}
 Let $ (M,F) $ be a connected complete Finsler n-manifold of scalar curvature. If $ (M,F) $ admits a  M\"{o}bius mapping, then it is conformal to one of the following spaces;\\
 \textbf{(a)}  A direct product $ I\times N $ of an open interval $ I $ of the real line and an $(n-1)$-dimensional complete Finsler manifold $ N $.\\
 \textbf{(b)} An n-dimensional Euclidean space;\\
 \textbf{(c)} An n-dimensional unit sphere in an Euclidean space.
 \eco
 \begin{proof}
  Let $ (M,F) $ be a Finsler manifold of scalar curvature admitting a non-homothetic conformal change, that is, there is a non-constant scalar function $ \varphi $ on $ M $, satisfying $ \bar{g_{ij}}=e^{\varphi(x)}g_{ij} $. A Finsler manifold is isotropic or of scalar curvature if and only if we have
   \begin{equation}\label{c11}
  \overset{\ _*}R^h_j=\kappa F^2(\delta^h_j-l^hl_j),
   \end{equation}
     where, $\kappa$ is the flag curvature and $l^i=\frac{y^i}{F} $, $ l_i=\frac{y_i}{F}$, or equivalently if and only if
     \begin{equation}\label{c22}
    \overset{\ _*}R^h_{jk}=\kappa F(l_k\delta^h_j-l_j\delta^h_k),
      \end{equation}
        where $  R^h_j=R^h_{jk}y^k $ and $  R^h_{jk}=R^h_{jkm}y^m $, see \cite[p.133-147]{Ru}.
     Recall that  \eqref{R*} claims $ \overset{\ _*}R^h_j= R^h_j $ and $ \overset{\ _*}R^h_{jk}= R^h_{jk} $. In fact by means of \eqref{c22} we have
        \[ R^h_j=R^h_{jk}y^k=y^k\kappa F(l_k\delta^h_j-l_j\delta^h_k)=\kappa F(F\delta^h_j-y^hl_j)=\kappa F(F\delta^h_j-l_jl^hF )=\kappa F^2(\delta^h_j-l_jl^h ).   \]
      Replacing \eqref{c11} and \eqref{c22} in \eqref{Z scalar} we obtain
    \begin{equation*}
    Z^h_{jk}=\kappa F(l_k\delta^h_j-l_j\delta^h_k)-F^{-2}(y_j(\kappa F^2(\delta^h_k-l^hl_k))-y_k(\kappa F^2(\delta^h_j-l^hl_j))=0.
    \end{equation*}
 We have $Z^h_{jk}=0$,  and by means of Theorem \ref{thm;z=0}, the partial differential equation \eqref{a9+} has non-trivial solutions. Next, from \eqref{nabla rho} consider the  M\"{o}bius equation $ {}^c\nabla_k\rho_l=\phi g_{lk}$,
  where $ \phi=-\rho\Phi $. Thus, if there exists a conformal change on $ (M,F) $, then there is a non-trivial solution $ \rho $ on $ M $ for \eqref{nabla rho}. Now if we assume  $ (M,F) $ is connected and complete, then as a consequence of Theorem \ref{Th;p1} we have proof of the corollary.
   \end{proof}
 \begin{theo}\cite{AsBi1}\label{Th;p1}
   Let $ (M,F) $ be a connected complete Finsler manifold of dimension $ n\geqslant 2 $. If $ M $ admits a non-trivial solution of
    \[ {}^c \nabla_j\rho_k=\phi g_{jk},\]
     where  $ \phi $ is certain function on $ M $, then depending on the number of critical points of $ \rho $, i.e. zero, one or two respectively, it is conformal to\\
     \textbf{(a)} A direct product $ I\times N $ of an open interval $ I $ of the real line and an $(n-1)$-dimensional complete Finsler manifold $ N $.\\
     \textbf{(b)} An n-dimensional Euclidean space;\\
      \textbf{(c)} An n-dimensional unit sphere in an Euclidean space.
      \end{theo}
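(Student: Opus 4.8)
The plan is to adapt the classical Tashiro--Obata analysis of concircular functions to the Cartan connection. The hypothesis ${}^c\nabla_j\rho_k=\phi g_{jk}$ asserts that the Hessian of $\rho$ is pointwise proportional to the metric, so $\rho$ is a concircular function, and the entire classification will be extracted from the gradient flow of $\rho$ together with the completeness of $F$. The guiding principle is that the integral curves of $grad\,\rho$ are reparametrised geodesics transverse to the level sets $\{\rho=c\}$, and that the level sets sweep out a warped structure whose degeneration pattern—dictated by the zeros of $\|grad\,\rho\|^2$—distinguishes the three cases.

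First I would isolate two structural identities. Applying ${}^c\nabla_i$ to $\|grad\,\rho\|^2=g^{jk}\rho_j\rho_k$ and using the metric compatibility of the Cartan connection together with the hypothesis gives
\begin{equation*}
{}^c\nabla_i\|grad\,\rho\|^2=2\phi\,\rho_i,
\end{equation*}
so that $\|grad\,\rho\|^2$ is constant on each connected regular level set and is therefore a function $\Lambda(\rho)$ of $\rho$ alone, with $\Lambda'(\rho)=2\phi$. Raising an index in the hypothesis yields ${}^c\nabla_j\rho^k=\phi\,\delta^k_j$, whence
\begin{equation*}
{}^c\nabla_{grad\,\rho}\,grad\,\rho=\phi\,grad\,\rho;
\end{equation*}
thus the integral curves of $grad\,\rho$ are, after reparametrisation by arc length $s$, Finslerian geodesics, they meet the level sets $g$-orthogonally (since $g_{grad\,\rho}(grad\,\rho,V)=d\rho(V)=0$ for $V$ tangent to a level set), and along a unit-speed trajectory one has $d\rho/ds=\sqrt{\Lambda(\rho)}$.

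Next I would build the normal form of the metric. Away from the critical set $\{grad\,\rho=0\}=\{\Lambda=0\}$ the variable $\rho$ (equivalently $s$) serves as a coordinate transverse to the level sets, and the two identities above say that the unit normal geodesics carry each level set onto the next, so that $F$ is, in these coordinates, a warped product over the interval swept by $\rho$ with a fixed $(n-1)$-dimensional complete Finsler fibre $N$ (a regular level set). A reparametrisation of the radial coordinate then pulls out an overall conformal factor and exhibits $F$ as conformal to a genuine direct product $I\times N$; this is case \textbf{(a)}, which occurs exactly when $\Lambda$ has no zero, i.e. $\rho$ has no critical point. If $\Lambda$ vanishes at one endpoint value of $\rho$, the corresponding level set collapses to a single critical point $p$, the geodesic trajectories emanate from $p$ as from a pole, completeness forces $\rho$ to range over a half-open interval and $M$ to be diffeomorphic to $\mathbb{R}^n$, and a local analysis of $\Lambda$ at its zero identifies the conformal class as Euclidean, giving case \textbf{(b)}. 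If $\Lambda$ vanishes at both endpoints there are two critical points, $\rho$ attains a finite minimum and maximum, $M$ is compact, and the same local pole model at each end together with the warped structure in between identifies the conformal class as that of the round sphere, giving case \textbf{(c)}.

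The main obstacle is the direction-dependence of the Finsler metric. Because $g_{ij}(x,y)$ depends on the reference vector, the orthogonal splitting $TM=\langle grad\,\rho\rangle\oplus T\{\rho=c\}$, the induced Finsler structure on the fibre $N$, and the claim that consecutive level sets differ only by a $\rho$-dependent homothety must all be established with the reference vector tracked carefully; verifying that the fibre metric is genuinely independent of the radial coordinate up to the warping factor replaces the Riemannian computation and is the technical heart of the argument. The second delicate point is the smoothness of this normal form across the critical points, where $grad\,\rho$ and hence the natural reference direction degenerate: one must show separately that $\Lambda$ has a simple zero there and that the metric extends smoothly across the pole, from which the Euclidean, respectively spherical, conformal model follows.
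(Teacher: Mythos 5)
The paper offers no proof of this statement at all: Theorem \ref{Th;p1} is imported verbatim from \cite{AsBi1} and used as a black box in the proof of Corollary \ref{Th;a,b,c}, so the only meaningful comparison is with the argument in that reference, which is indeed the Tashiro-type gradient-flow analysis you outline. Your preparatory identities are essentially right, with one caveat you gloss over: in deriving ${}^c\nabla_i\|grad\,\rho\|^2=2\phi\,\rho_i$, ``metric compatibility'' of the Cartan connection is not by itself enough, because $\|grad\,\rho\|^2=g^{jk}(x,grad\,\rho)\rho_j\rho_k$ involves the metric evaluated at a reference vector that varies with $x$; the extra terms produced by differentiating through the reference vector involve the Cartan tensor contracted with $grad\,\rho$ in two slots, and one must invoke $C_{ijk}y^k=0$ (with $y=grad\,\rho$) to kill them. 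The same reference-vector bookkeeping is needed to conclude that the integral curves of $grad\,\rho$ are geodesics and that they meet the level sets orthogonally in the $g_{grad\,\rho}$ sense.

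The genuine gap is that your proposal is a plan rather than a proof: the two steps you yourself label ``the technical heart'' --- (i) that consecutive level sets carry one and the same $(n-1)$-dimensional Finsler structure up to a $\rho$-dependent factor, so that the metric is conformal to a genuine product $I\times N$, and (ii) the smooth extension of this normal form across a critical point, including the claim that $\Lambda$ has a simple zero there and that the local model is Euclidean (one pole) or spherical (two poles) --- are stated as goals and never carried out, and in the Finsler setting these constitute essentially the entire content of the theorem (in \cite{AsBi1} they are handled via adapted coordinates and the scalar ODE $d^2\rho/ds^2=\phi(\rho)$ along the trajectories, with completeness controlling the range of $\rho$). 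In addition, your trichotomy is not shown to be exhaustive: you never argue that the critical points of $\rho$ are isolated (this follows from $Hess(\rho)=\phi g$ being definite at a critical point where $\phi\neq 0$, and needs a separate word when $\phi$ vanishes there) nor that completeness and connectedness force their number to be at most two. Without these points the conclusion ``zero, one or two critical points yield (a), (b), (c) respectively'' is asserted, not proved.
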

The following proposition shows the relationship between M\"{o}bius mapping and C-conformal transformations.
     \bpr
     Every M\"{o}bius mapping  is a C-conformal transformation.
     \epr
      \begin{proof}
    Let $f:(M,F)\to (M,\bar{F})$ be a M\"{o}bius mapping. By definition
     $B_{ij}(\varphi)=0$ that is
     \begin{equation}\label{B=0}
     \varphi_{ij}-(\Gamma^h_{ij}+C^h_{ij})\varphi_h-\varphi_i\varphi_j-\Phi g_{ij}=0,
     \end{equation}
     where $ \Phi=\frac{1}{n}(\Delta\varphi-\|\nabla\varphi\|^2) $, $ \varphi_i=\frac{\partial \varphi}{\partial x^i} $  and $ \varphi_{ij}=\frac{\partial^2\varphi}{\partial x^i\partial x^j} $.
     By differentiating  \eqref{B=0} by $y^k$ we have
     \begin{equation*}
     -\Gamma^h_{ijk}\varphi_h-C^h_{ijk}\varphi_h-\Dot{\Phi}_kg_{ij}-\Phi(2C_{ijk})=0,
     \end{equation*}
      where $\dot{\Phi}=\frac{\partial \Phi}{\partial y^k} $,  $\Gamma^h_{ijk}:=\frac{\p }{\p y^k}\Gamma^h_{ij}$ and
     $C^h_{ijk}= \frac{\partial}{\partial y^k}C^h_{ij}$.
     Contracting the both sides of the last equation with $ y^k $ yields
     \begin{equation}\label{B2}
     -\Gamma^h_{ijk}\varphi_hy^k-C^h_{ijk}\varphi_hy^k-\dot{\Phi}_{k}g_{ij}y^k=0.
     \end{equation}
A moment's thought shows that the components of the Christoffel symbols $\Gamma^h_{ij}$ given in \eqref{Eq;Chris.Sym.}
 are positively homogeneous of degree $(0)$ since all of its three terms  $\delta_k g_{jh}=\frac{\partial g_{jh}}{ \partial x^k}- G^i_k \frac{\partial g_{jh}}{\partial y^i}$  are of degree $(0)$. In fact, $g_{jh}$ and its derivative with respect to $x^k$   are 0-homogeneous and $G^i_k$ are homogeneous of degree $(1)$.
 Therefore, $\Gamma^h_{ijk}=\frac{\p }{\p y^k}\Gamma^h_{ij}$  are homogeneous of degree $(-1)$.
As well, the components of Cartan tensor $C_{ijk} $ are positively homogeneous of degree $(-1)$ and $ C^h_{ijk} $ is positively homogeneous of degree $(-2)$. Therefore  Euler's theorem implies $\Gamma^h_{ijk}y^k=0=C_{ijk}y^k$
 and $ -C^h_{ijk}y^k=C^h_{ij} $, hence \eqref{B2} yields
     \begin{equation}\label{C,Phi}
      C^h_{ij}\varphi_h-\dot{\Phi}_kg_{ij}y^k=0.
     \end{equation}
 Contracting  the both sides of  the above equation with $y^iy^j$ and using $ g_{ij}y^iy^j=F^2 $, we obtain
$F^2\dot{\Phi}_ky^k=0$, hence $ \dot{\Phi}_ky^k=0 $.
    Therefore \eqref{C,Phi} yields
    $ C^h_{ij}\varphi_h=0 $ which completes the proof.
     \end{proof}
   \section{Schwarzian and Bonnet-Myers theorem}
   The following  Bonnet-Myers type theorem confirms Thurston's hypothesis and shows that the Schwarzian derivative of a projective parameter plays an identical role to the Ricci curvature on complete Finsler manifolds.
   \begin{theo}\cite[p.194]{BCS}\label{bonnet}
   Let $(M,F)$ be an $n$-dimensional forward-complete connected Finsler manifold. Suppose its Ricci curvature has the uniform positive lower bound
   \[Ric\geqslant(n-1)\lambda>0;\]
   equivalently, $ y^iy^jRic_{ij}(x,y)\geqslant(n-1)\lambda F^2(x,y) $, with $ \lambda>0 $. Then:\\
   $(i)$ Every geodesic of length at least $ \pi/ \sqrt{\lambda} $ contains conjugate points.\\
   $ (ii) $ The diameter of $ M $ is at most $ \pi/ \sqrt{\lambda} $.\\
   $ (iii) $ $ M $ is in fact compact.\\
   $ (iv) $ The fundamental group $ \pi(M,x) $ is finite.
   \end{theo}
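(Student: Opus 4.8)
The plan is to prove this classical Finsler Bonnet--Myers theorem by the second-variation (index-form) method, carrying the Riemannian argument into the Finsler setting. First I would fix a unit-speed geodesic $\sigma:[0,r]\to M$ with velocity field $T$ and recall the second variation of arc length: for a piecewise-smooth vector field $W$ along $\sigma$ vanishing at the endpoints, the index form is
\[
I(W,W)=\int_0^r\big[g_T(D_TW,D_TW)-g_T(R(W,T)T,W)\big]\,dt ,
\]
where $D_T$ is the covariant derivative with reference vector $T$ and $R(\cdot,T)T$ is the hh-curvature. The governing principle from Morse index theory is that a field $W$ with $I(W,W)<0$ forces $\sigma$ to have positive index, hence an interior conjugate point in $(0,r)$, and in particular $\sigma$ fails to be forward-minimizing.

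Next I would produce the test fields. Choosing vector fields $E_1,\dots,E_{n-1}$ parallel along $\sigma$ which, together with $E_n:=T$, form a $g_T$-orthonormal frame, I would set $W_a(t):=\sin(\tfrac{\pi t}{r})\,E_a(t)$ for $a=1,\dots,n-1$; these vanish at $t=0,r$. Since $D_TE_a=0$, the index form collapses to
\[
I(W_a,W_a)=\int_0^r\Big[\tfrac{\pi^2}{r^2}\cos^2\tfrac{\pi t}{r}-\sin^2\tfrac{\pi t}{r}\,g_T(R(E_a,T)T,E_a)\Big]\,dt .
\]
Summing over $a$, invoking $\sum_{a=1}^{n-1}g_T(R(E_a,T)T,E_a)=Ric(T)\geqslant(n-1)\lambda$ together with the elementary integrals $\int_0^r\cos^2\tfrac{\pi t}{r}\,dt=\int_0^r\sin^2\tfrac{\pi t}{r}\,dt=\tfrac{r}{2}$, I would obtain
\[
\sum_{a=1}^{n-1}I(W_a,W_a)\leqslant(n-1)\,\tfrac{r}{2}\Big(\tfrac{\pi^2}{r^2}-\lambda\Big).
\]

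The four assertions then follow in order. For $(i)$: if $r>\pi/\sqrt{\lambda}$ the right-hand side is strictly negative, so $I(W_a,W_a)<0$ for some $a$, whence $\sigma$ carries an interior conjugate point, and a limiting argument covers the boundary length $r=\pi/\sqrt{\lambda}$. For $(ii)$: forward completeness and the Finsler Hopf--Rinow theorem join any two points by a forward-minimizing geodesic, which (having no interior conjugate point) cannot exceed length $\pi/\sqrt{\lambda}$ by $(i)$, so $\mathrm{diam}(M)\leqslant\pi/\sqrt{\lambda}$. For $(iii)$: Hopf--Rinow makes closed forward balls compact, and the diameter bound realizes $M$ itself as such a ball, hence compact. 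For $(iv)$: lifting $F$ to the universal cover $\widetilde M$ yields a forward-complete Finsler metric with the same Ricci bound, so $\widetilde M$ is compact by $(iii)$; the fiber of $\widetilde M\to M$ over a point, being a discrete closed subset of a compact space and in bijection with $\pi(M,x)$, is therefore finite.

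The main obstacle is Finsler-specific: unlike the Riemannian case, the fundamental tensor $g_T$ depends on the reference direction $T$, so I must verify that parallel transport along $\sigma$ with reference vector $T$ preserves $g_T$, keeping the frame $\{E_a\}$ orthonormal and reducing the summed index form cleanly to the Ricci term. This rests on the Chern connection being almost $g$-compatible, the Cartan-tensor correction entering only through $\delta y$, which vanishes along the geodesic because the velocity $y=T$ is itself parallel. Once this structural fact and the accompanying Finsler second-variation formula are in place (the index theory of \cite{BCS}), the remaining trigonometric estimate and completeness arguments are identical to the classical theorem.
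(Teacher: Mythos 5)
The paper gives no proof of this theorem at all --- it is imported verbatim from \cite[p.194]{BCS} and used as a black box --- and your argument is exactly the second-variation/index-form proof given in that source: $\sin(\pi t/r)$-rescaled parallel orthonormal test fields, summation to invoke the Ricci lower bound, then Hopf--Rinow for the diameter/compactness claims and the universal-cover argument for finiteness of $\pi(M,x)$. Your proof is correct, including the genuinely Finslerian point that parallel transport with reference vector $T$ preserves $g_T$ along the geodesic (since $T$ is parallel, the Cartan-tensor correction drops out), so nothing further is needed.
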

Using the approximation of the Schwarzian derivative, we can characterize the forward-complete Finsler manifolds.\\

   \emph{\textbf{Proof of Theorem \ref{Th;Bonnet-Myers}.}}
   Let $(M,F)$ be an $n$-dimensional  connected Finsler manifold and $\gamma(t)$ a geodesic on $(M,F)$.
 In general, the parameter $t$ in  $\gamma(t)$ does not remain invariant under the projective changes of $F$. There is a unique parameter up to linear fractional transformation which remains invariant under a projective change  of Finsler structure, called projective parameter, see \cite{SBi}.
In fact let $p(s)$ be a projective parameter on $(M,F)$, where $s$ is the arc length parameter of the geodesic $\gamma$.  Schwarzian derivative of the projective parameter $p(s)$ is given by;
\bes
S(p(s))=\frac{\frac{d^3p}{ds^3}}{\frac{dp}{ds}}-\frac{3}{2}\Big[\frac{\frac{d^2p}{ds^2}}{\frac{dp}{ds}}\Big]^2, \ees
and the projective parameter $p$ is a solution of the above ODE.
One can show that, the projective parameter $p(s)$ is unique up to a linear fractional transformations, that is
       \begin{equation*}
       S(p\circ T)=S(p),
       \end{equation*}
      where $ T=\frac{ax+b}{cx+d}$ and $ad-bc\neq 0$.
It is well known that,
      \begin{equation} \label{sch,Ric}
      S\big(p(s)\big) =\frac{2}{n-1}Ric_{jk}\frac{d{x}^j}{ds}\frac{d{x}^k}{ds}=\frac{2}{n-1}F^2Ric,
      \end{equation}
           where $ S\big(p(s)\big) $ is the Schwarzian of \textquotedblleft $ p(s) $\textquotedblright and \textquotedblleft $ s $\textquotedblright is the arc length parameter, see
           \cite{BiS}.

       When  the Ricci tensor is parallel with respect to any of Berwald, Chern or Cartan
      connection, then it is constant along the geodesics and we can easily solve the equation (\ref{sch,Ric}),   see for more details \cite[page 5]{BiS}.

    Let the Schwarzian of the projective parameter  $p$ satisfies, $S(p)\geqslant2F^2\lambda$,  where $ \lambda>0 $, is a positive number. Equation \eqref{sch,Ric} yields
   \[\frac{2}{n-1}F^2Ric\geqslant2F^2\lambda,\]
   hence $ Ric\geqslant(n-1)\lambda>0.$ Assuming $(M,F)$ is forward-complete, the proof is a consequence of  Bonnet-Myers theorem \ref{bonnet}.
   \hspace {\stretch{1}}$\Box$\\
      In order to approximate other smooth functions on a Finsler manifold with compact support one can use the method explained in \cite{BiSh}.

\textbf{ Acknowledgement.}
The first author would like to thank the ``Institut de Math\'{e}matiques de Toulouse" (ITM) at the Paul Sabatier University of Toulouse, where this article is partially written.

       Behroz Bidabad\\
    Department of Mathematics and Computer Sciences\\
    Amirkabir University of Technology (Tehran Polytechnic),
    424 Hafez Ave. 15914 Tehran, Iran.
    E-mail: bidabad@aut.ac.ir\\
    Faranak Seddighi\\
    Faculty of Mathematics, Payame Noor University of Tehran, Tehran, Iran.\\
    f-seddighi@student.pnu.ac.ir

    \end{document}